\documentclass[fleqn,11pt]{article}
\textheight 8.5in
\textwidth 6.5in
\voffset -0.3in
\hoffset -0.4in

\pagestyle{plain}

\usepackage{amsmath,amssymb, amsthm}
\usepackage{mathrsfs,enumerate}
\sloppy
\allowdisplaybreaks

\numberwithin{equation}{section}
\newtheorem{theorem}{Theorem}[section]
\newtheorem{corollary}[theorem]{Corollary}

\newtheorem{lemma}[theorem]{Lemma}
\newtheorem{proposition}[theorem]{Proposition}

\theoremstyle{definition}
\newtheorem{definition}[theorem]{Definition}
\newtheorem{remark}[theorem]{Remark}

\newcommand{\bEm}{\mathbb{E}_m}
\newcommand{\diver}{{\rm div}}
\newcommand{\N}{\mathbb{N}}
\newcommand{\R}{\mathbb{R}}
\newcommand{\divg}{\diver_\gamma}
\newcommand{\Dg}{{\Delta_\gamma}}
\newcommand{\gga}{\gamma}
\newcommand{\Ldeu}{L^2(X,\gga)}

\newcommand{\FCb}{{\mathcal F C}_b^1}
\newcommand{\FCbt}{{\mathcal F C}_b^2}
\newcommand{\FCbk}{{\mathcal F C}_b^k}

\newcommand\scal[2]{{\left\langle #1 ,#2\right\rangle}}

\def\<{\langle}
\def\>{\rangle}

\def\1{{\bf 1}}

\title{A fractional isoperimetric problem in the Wiener space} 
\author{Matteo Novaga\footnote{Dipartimento di Matematica, 
Universit\`a di Pisa, Largo Bruno Pontecorvo 5, 56127 Pisa, Italy; e-mail: novaga@dm.unipi.it}, 
Diego Pallara\footnote{Dipartimento di Matematica e Fisica ``Ennio De Giorgi'', Universit\`a del Salento, 
P.O.B. 193, 73100, Lecce, Italy; e-mail: diego.pallara@unisalento.it} and 
Yannick Sire\footnote{Universit\'e Aix-Marseille, LATP, UMR CNRS 7353, Marseille, France; 
e-mail: sire@cmi.univ-mrs.fr}}
\date{}
\begin{document}
\maketitle
\begin{abstract}
We introduce a notion of fractional perimeter in an abstract Wiener space,
and we show that halfspaces are the only volume-constrained minimisers.
\end{abstract}


\section{Introduction}
The purpose of this paper is introducing a notion
of fractional perimeter in an abstract Wiener space, following 
the approach developed in the seminal work \cite{CRS},
and studying the symmetry properties of minimisers for this functional.
More precisely, our main result is to prove
that halfspaces are the unique isoperimetric sets 
for the fractional perimeter, 
as it happens for the usual perimeter (see \cite{CarKer01OnT}, 
\cite{Led98}, \cite[Remark 4.7]{AMMP}).
Owing to the well-known relation between 
the isoperimetric problem and the Allen-Cahn energy \cite{momo} (see also 
\cite{GoldNov} for an extension of the result to Wiener spaces,
and \cite{SaVa} for a nonlocal version in finite dimensions), 
we also prove the one-dimensional symmetry
of minimisers of the corresponding nonlocal Allen-Cahn energy 
(see Theorem \ref{th.Doublewell}).
We now state the main result of this paper
(see Section \ref{secnot} for the precise definitions).

\begin{theorem}\label{thisoperimetric}
For any $s\in (0,1)$ and $m\in (0,1)$ there exists 
a set $E_m\subset X$ which solves the isoperimetric problem
\begin{equation}\label{eqisoper}
\min \Bigl\{ P_{\gamma,s}(E):\ E\subset X,\ \gamma(E)=m\Bigr\}. 
\end{equation}
Moreover, the set $E_m$ is necessarily a half-space, i.e., 
$E=\{\hat{h}<c\}$ for some $h\in H$ and $c\in{\mathbb R}$. 
\end{theorem}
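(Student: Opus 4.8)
The plan is to reduce the whole statement---both existence and the half-space characterisation---to \textbf{Borell's Gaussian noise-stability inequality} and its equality case, via the heat-semigroup representation of the fractional perimeter; in the Wiener space this plays the role that Riesz rearrangement plays in the Euclidean theory of \cite{CRS}. First I would record that, whatever normalisation is fixed in Section~\ref{secnot}, one has the subordination representation
\[
P_{\gamma,s}(E)=c_s\int_0^{\infty}t^{-1-s/2}\Bigl(\gamma(E)-\langle T_t\1_E,\1_E\rangle_{\Ldeu}\Bigr)\,dt ,
\]
with $c_s>0$, where $T_t$ is the Ornstein--Uhlenbeck semigroup of $\gamma$, coming from the subordination formula for $(-\Dg)^{s/2}$. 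The integrand is nonnegative (since $0\le T_t\le I$); for a half-space it behaves like $O(\sqrt t)$ as $t\to0^+$ (a codimension-one interface) and tends to $\gamma(E)-\gamma(E)^2$ as $t\to\infty$, so it is integrable against $t^{-1-s/2}$ precisely when $s\in(0,1)$, and in particular the infimum in \eqref{eqisoper} is finite.

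Next I would rewrite the bilinear term as a noise stability: by the Mehler formula $T_t\1_E(x)=\int_X\1_E(e^{-t}x+\sqrt{1-e^{-2t}}\,y)\,d\gamma(y)$, whence
\[
\langle T_t\1_E,\1_E\rangle_{\Ldeu}=(\gamma\otimes\gamma)\bigl\{(x,y):\ x\in E,\ e^{-t}x+\sqrt{1-e^{-2t}}\,y\in E\bigr\},
\]
the probability that two $\rho$-correlated $\gamma$-distributed points both lie in $E$, with $\rho=e^{-t}\in(0,1)$. Borell's inequality says that, among sets of fixed $\gamma$-measure $m$, this quantity is maximised by half-spaces; in the Wiener space I would obtain it from the finite-dimensional $[0,1]$-valued statement by approximating $\1_E$ with the cylindrical functions $\mathbb{E}_n\1_E$ and using that $T_t$ commutes with the conditional expectations. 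For $E_m=\{\hat h<c\}$ with $|h|_H=1$ and $c=\Phi^{-1}(m)$ ($\Phi$ the standard Gaussian distribution function, so that $\gamma(E_m)=m$), rotation invariance of $\gamma$ reduces the maximal value to a one-dimensional quantity $\psi(m,t)$ depending only on $m$ and $t$; thus $\langle T_t\1_E,\1_E\rangle_{\Ldeu}\le\psi(m,t)=\langle T_t\1_{E_m},\1_{E_m}\rangle_{\Ldeu}$ for every $t>0$.

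Integrating this against $c_s t^{-1-s/2}\,dt$ would give $P_{\gamma,s}(E)\ge P_{\gamma,s}(E_m)$ for every $E$ with $\gamma(E)=m$, so the half-space $E_m$ solves \eqref{eqisoper}: this is existence. For the characterisation, suppose $E$ is any minimiser; then $\int_0^\infty t^{-1-s/2}\bigl(\psi(m,t)-\langle T_t\1_E,\1_E\rangle_{\Ldeu}\bigr)\,dt=0$, the integrand is nonnegative by the previous step and (by strong continuity of $T_t$) continuous in $t$, hence it vanishes identically. Fixing one $t_0>0$, this is equality in Borell's inequality at the correlation $\rho_0=e^{-t_0}\in(0,1)$, and its equality case forces $E$ to coincide $\gamma$-a.e.\ with a half-space $\{\hat h<c\}$, as claimed.

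The hard part is the Borell-type input used to deduce the characterisation: one needs not just the noise-stability inequality but its \emph{rigidity}, namely that maximality of $\langle T_t\1_E,\1_E\rangle$ at a single $\rho\in(0,1)$ already pins $E$ down as a half-space up to a null set, and that both the inequality and its equality case can be established in the infinite-dimensional setting. The inequality itself can be reached either by iterating Ehrhard symmetrisation over finite-dimensional directions and passing to the limit, or by heat-flow/semigroup monotonicity; isolating the equality case---and checking it is not lost in the finite-dimensional approximation---is where the special structure of the Gaussian measure is essential. The remaining ingredients (the subordination representation, the finiteness estimate, martingale convergence of $\mathbb{E}_n\1_E$, rotation invariance of $\gamma$) are routine Gaussian analysis.
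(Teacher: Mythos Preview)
Your approach is correct in outline and genuinely different from the paper's. The paper never writes the fractional perimeter as a heat--semigroup (noise--stability) integral; instead it works entirely through the Caffarelli--Silvestre extension \eqref{defHdot}. Concretely, it splits the extended Dirichlet energy into
\[
J_1(v)=\int_{X\times\R^+}|\partial_y v|^2\,y^{1-2s}\,d\gamma\,dy,\qquad
J_2(v)=\int_{X\times\R^+}|\nabla_\gamma v|_H^2\,y^{1-2s}\,d\gamma\,dy,
\]
and proves (Lemmas \ref{brock} and \ref{ehrhard}) that each piece decreases under Ehrhard symmetrisation $v\mapsto v^*_h$ in a single Cameron--Martin direction; iterating over a dense family of directions and passing to the limit yields Theorem \ref{iteration}, whose equality case (via \cite[Prop.~3.12]{GoldNov}) forces one--dimensionality. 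Theorem \ref{thisoperimetric} is then the special case $u=\chi_E$.

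Your route trades the extension variable for the subordination/time variable: you reduce everything to Borell's noise--stability inequality applied pointwise in $t$, and then invoke its rigidity at a single correlation to pin down the half--space. This is more direct and arguably cleaner \emph{if} one is willing to import Borell's theorem and its equality case as black boxes; the paper's route instead \emph{builds} the symmetrisation machinery (Lemma \ref{brock} is singled out as new even in finite dimensions), which is of independent interest and yields Theorem \ref{th.Doublewell} as a byproduct. Both arguments push the delicate point---stability/rigidity of the symmetrisation step in infinite dimensions---to an external reference: you to the equality case of Borell (e.g.\ Mossel--Neeman), the paper to \cite{GoldNov}.

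Two small points to tidy up. First, with the paper's conventions ($[u]^2_{H^s_\gamma}=c_s\langle(-\Dg)^s u,u\rangle$ by Lemma \ref{Euler}), the subordination representation you want is for $P_{\gamma,s}(E)^2$, with weight $t^{-1-s}$, not for $P_{\gamma,s}(E)$ with weight $t^{-1-s/2}$; this does not affect the minimisation. Second, in your cylindrical approximation step you correctly need the \emph{functional} ($[0,1]$--valued) Borell inequality, since $\mathbb E_n\1_E$ is not a characteristic function; you note this, but it is worth checking that the rigidity also survives the limit $n\to\infty$---concretely, that equality at a fixed $t_0$ forces each $\mathbb E_n\1_E$ to be (close to) one--dimensional in a way that is stable under the martingale convergence.
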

The proof of Theorem \ref{thisoperimetric} is based on the extension technique introduced in \cite{CS}. 
Indeed, the fractional perimeter, and more generally the fractional Sobolev seminorm defined in 
\eqref{defHdot}, can be obtained via the minimisation of a Dirichlet energy after adding an extra 
variable that lies on a half-line endowed with a degenerate measure. As a consequence, the 
isoperimetric problem can be tackled by studying this minimisation problem. To this aim, 
we split the Dirichlet functional in two contributions $J_1$ and $J_2$ in a natural way 
and show that both are decreasing under Ehrhard symmetrisation defined in \eqref{defI*}. 
These results are proved in Lemmas \ref{brock} (which seems to be new
even in the finite dimensional case) and \ref{ehrhard}, respectively. In the first proof we adapt 
a technique in \cite{Brock}, in the second we use cylindrical approximations to extend 
the result from the finite dimensional to the infinite dimensional setting, see Lemma \ref{lemGN}. 
These symmetrisation results we believe to be interesting on their own.

Partial symmetrisations in product spaces are also used in \cite{FusMagPra}, with the aim of studying 
isoperimetric problems with respect to product measures. Also in this case, it is shown that 
the advantage of symmetrising with respect to a set of variables is not affected by the others. 

\section{Notation and preliminary definitions}\label{secnot}
We collect here the definitions and the preliminaries results needed in the sequel. 
The first two subsections are devoted to the structure of the Wiener space; for all this 
results we refer to the book \cite{B}. In the third subsection we introduce the fractional 
perimeters and Sobolev seminorms and use the extension technique in \cite{CS,TS} to show
some further preliminary results. 

\subsection{The Wiener space}\label{secwiener}

An abstract Wiener space is a triple $(X,\gamma,H)$ where
$X$ is a separable Banach space, endowed with the norm $\|\cdot\|_X$,
$\gamma$ is a nondegenerate centred Gaussian measure,
and $H$ is the Cameron-Martin space associated with the measure $\gamma$, that is,
$H$ is a separable Hilbert space densely embedded in $X$, endowed with the inner product
$[\cdot, \cdot ]_H$ and with the norm $|\cdot |_H$. The
requirement that $\gamma$ is a centred Gaussian measure means that
for any $x^*\in X^*$, the measure $x^*_\#\gamma$ is a centred Gaussian
measure on the real line $\R$, that is, the Fourier transform
of $\gamma$ is given by
\[
\hat \gamma(x^*) = \int_X 
e^{-i\scal{x}{x^*}}\, d\gamma (x)=\exp\left(
-\frac{\scal{Qx^*}{x^*}}{2}
\right),\qquad \forall x^*\in X^*;
\]
here the operator $Q\in {\mathcal L}(X^*,X)$ is the covariance operator and it is
uniquely determined by the formula
\[
\scal{Qx^*}{y^*}=\int_X \scal{x}{x^*}\scal{x}{y^*}d\gamma(x),\qquad \forall x^*,y^*\in X^*.
\]
The nondegeneracy of $\gamma$ implies that $Q$ is positive definite: the boundedness
of $Q$ follows by Fernique's Theorem (see \cite[Theorem 2.8.5]{B}), 
asserting that there exists a positive number $\beta>0$
such that
\[
 \int_X e^{\beta\|x\|_X^2}d\gamma(x)<+\infty.
\]
This implies also that the maps $x\mapsto \scal{x}{x^*}$ belong to 
$L^p_\gamma(X)$ for any $x^*\in X^*$ and $p\in [1,+\infty)$, where $L^p_\gamma(X)$ 
denotes the space of all $\gamma$-measurable functions $f:X\to \R$
such that 
\[
\int_X |f(x)|^p d\gamma(x)<+\infty.
\]
In particular, any element $x^*\in X^*$ can be seen as a map $x^*\in L^2_\gamma(X)$, and
we denote by $R^*: X^*\to {\mathcal H}$ the identification map $R^*x^*(x):=\scal{x}{x^*}$.
The space ${\mathcal H}$ given by the closure of $R^*X^*$ in $L^2_\gamma(X)$
is usually called reproducing kernel. 
By considering the map $R: {\mathcal H}\to X$
defined through the Bochner integral 
\[
R\hat{h} := \int_X \hat{h}(x)x\, d\gamma(x),
\]
we obtain that $R$ is an injective $\gamma$--Radonifying operator, which is
Hilbert--Schmidt when $X$ is Hilbert. We also have  
$Q=RR^*:X^*\to X$. The space $H:=R{\mathcal H}$, equipped with the inner product $[\cdot,\cdot]_H$
and norm $|\cdot|_H$ induced by ${\mathcal H}$ via $R$, is the Cameron-Martin space  
and is a dense subspace of $X$. The continuity of $R$ implies
that the embedding of $H$ in $X$ is continuous, that is, there exists $c>0$
such that
\[
 \|h\|_X \leq c|h|_H,\qquad \forall h\in H.
\] 
We have also that the measure $\gamma$ is absolutely continuous with respect
to translation along Cameron-Martin directions; in fact, for $h\in H$, $h=Qx^*$, 
the measure
$\gamma_h(B)=\gamma(B-h)$ is absolutely continuous with respect to $\gamma$ with
density given by
\[
d\gamma_h(x)=\exp\left(
\scal{x}{x^*}-\frac{1}{2}|h|_H^2
\right)d\gamma(x) .
\]

\subsection{Cylindrical functions and differential operators}\label{cylindrical}

For $j\in \N$ we choose $x^*_j\in X^*$ in such a way that 
$\hat h_j:= R^*x_j^*$, or equivalently $h_j:=R\hat h_j=Qx^*_j$, form an orthonormal basis
of $H$. 
We order the vectors $x^*_j$ in such a way that the numbers $\lambda_j:=\|x^*_j\|_{X^*}^{-2}$
form a non-increasing sequence.
Given $m\in\mathbb N$, we also let $H_m:=\langle h_1,\ldots, h_m\rangle\subseteq H$, 
and $\Pi_m: X\to H_m$ be the closure of the orthogonal projection from $H$ to $H_m$
\[
\Pi_m(x) := \sum_{j=1}^m \scal{x}{x^*_j}\, h_j \qquad x\in X.
\]
The map $\Pi_m$ induces the decomposition $X\simeq H_m\oplus X_m^\perp$, with $X_m^\perp:= {\rm ker}(\Pi_m)$,
and $\gamma=\gamma_m\otimes\gamma_m^\perp$,
with $\gamma_m$ and $\gamma_m^\perp$ Gaussian measures on $H_m$ and $X_m^\perp$ respectively, 
having $H_m$ and $H_m^\perp$ as Cameron-Martin spaces. 
When no confusion is possible we identify $H_m$ with $\R^m$;
with this identification the measure
$\gamma_m={\Pi_m}_\#\gamma$ is the standard Gaussian measure on $\R^m$.
Given $x\in X$,
we denote by $\underline x_m\in H_m$ the projection $\Pi_m(x)$, 
and by $\overline x_m\in X_m^\perp$ the infinite dimensional component of $x$, so that 
$x=\underline x_m+\overline x_m$.
When we identify $H_m$ with $\R^m$ we rather write 
$x=(\underline x_m,\overline x_m)\in \R^m\oplus X_m^\perp$.

We say that $u:X\to \R$ is a {\em cylindrical function} if $u(x)=v(\Pi_m (x))$ for some $m\in\mathbb N$ and 
$v:\R^m\to \R$. 
We denote by $\FCbk(X)$, $k\in\mathbb N$, 
the space of all $C^k_b$ cylindrical functions, that is, functions of the form $v(\Pi_m (x))$
with $v\in C_b^k(\R^m)$, with continuous and bounded derivatives up to the order $k$. 
We denote by $\FCbk(X,H)$ the space generated by all functions of the form 
$u h$, with $u\in \FCbk(X)$ and $h\in H$.

Given $u\in \Ldeu$, we consider the canonical cylindrical approximation operators $\bEm$ given by
\begin{equation}\label{cancylapprox}
\bEm u (x)=\int_{X_m^\perp} u(\Pi_m(x),y) \,d\gga_m^\perp(y).
\end{equation}
Notice that $\bEm u$ depends only on the first $m$ variables 
and $\bEm u$ converges  to $u$ in $L^p_\gamma(X)$ for all $1\leq p<\infty$.
We let
\[
\begin{array}{ll}
\displaystyle{\nabla_\gamma u := \sum_{j\in\mathbb N}\partial_j u\, h_j} & {\rm for\ } u\in \FCb(X)
\\
\\
\displaystyle{\divg \varphi := \sum_{j\geq 1}\partial^*_j [\varphi,h_j]_H} & {\rm for\ }\varphi\in \FCb(X,H)
\\
\\
\displaystyle{\Dg u := \divg\nabla_\gamma u} & {\rm for\ } u\in \FCbt(X)
\end{array}
\]
where $\partial_j := \partial_{h_j}$ and
$\partial_j^* := \partial_j - \hat h_j$ is the adjoint operator of $\partial_j$.
With this notation, the following integration by parts formula holds:
\begin{equation}\label{inp}
\int_X u\, \divg \varphi\,d\gamma = -\int_X [\nabla_\gamma u,\varphi]_H\, d\gamma
\qquad \forall \varphi\in \FCb(X,H).
\end{equation}
In particular, thanks to \eqref{inp}, the operator $\nabla_\gamma$ is closable in $L^p_\gamma(X)$,
and we denote by $W^{1,p}_\gamma(X)$ the domain of its closure. The Sobolev spaces 
$W^{k,p}_\gamma(X)$, with $k\in\mathbb N$ and $p\in [1,+\infty]$, can be defined analogously,
and $\FCbk(X)$ is dense in $W^{j,p}_\gamma(X)$, for all $p<+\infty$ and $k,j\in\mathbb N$ with $k\ge j$.
Given a vector field $\varphi \in L^{p}_\gamma(X;H)$, $p\in (1,\infty]$, using \eqref{inp} we can define
$\mathrm{div}_\gamma \, \varphi$ in the distributional sense,
taking test functions $u$ in  $W^{1,q}_\gamma(X)$ with
$\frac{1}{p}+\frac{1}{q} = 1$. 
We say that
$\mathrm{div}_\gamma\, \varphi \in L^p_\gamma(X)$ if this linear functional can be extended to all test 
functions $u\in L^{q}_\gamma(X)$. This is true in particular if $\varphi\in W^{1,p}_\gamma(X;H)$.

Let $u\in W^{2,2}_\gamma(X)$, $\psi\in \FCb(X)$ and $i,j\in \mathbb N$.
{}From \eqref{inp}, with $u=\partial_j u$ and $\varphi=\psi h_i$, we get
\begin{equation}\label{parts}
\int_X \partial_j u\,\partial_{i}\psi \,d\gamma = 
\int_X -\partial_i(\partial_{j}u)\,\psi+ \partial_ju\,\psi\langle x^*_i,x\rangle d\gamma
\end{equation}
Let now $\varphi\in \FCb(X,H)$. If we apply \eqref{parts} with $\psi=[\varphi,h_j]=:\varphi^j$, we obtain 
\[
\int_X \partial_j u\,\partial_{i}\varphi^j \,d\gamma = 
\int_X -\partial_j(\partial_{i}u)\,\varphi^j
+ \partial_ju\,\varphi^j\langle x^*_i,x\rangle d\gamma
\]
which, summing up in $j$, gives 
\[
\int_X [\nabla_\gamma u,\partial_i \varphi]\,d\gamma = \int_X -[\nabla_\gamma (\partial _i u), \varphi] 
+  [\nabla_\gamma u,\varphi] \langle x^*_i,x\rangle d\gamma
\]
for all $\varphi\in \FCb(X,H)$.

The operator $\Dg:W^{2,p}_\gamma(X)\to L^p_\gamma(X)$ is usually called the 
Ornstein-Uhlenbeck operator on $X$.
Notice that, if $u$ is a cylindrical function, that is $u(x)=v(y)$ with  
$y=\Pi_m(x)\in\R^m$ and $m\in\mathbb N$,
then
\[
\Dg u = \sum_{j=1}^m \partial_{jj}u-\langle x_j^*,x\rangle\partial_{j}u = 
\Delta v - \langle y,\nabla v \rangle_{\R^m}\,.
\]

\subsection{Fractional Sobolev spaces and fractional perimeters}

Since the operator $-\Delta_\gamma$ is positive and self-adjoint in 
$L^2_\gamma(X)$, one can define its fractional powers 
by means of the standard formula in spectral theory  
\[
\displaystyle(-\Delta_{\gamma})^{s}=\frac1{\Gamma(-s)}\int_0^\infty
\left(e^{t\Delta_{\gamma}}-\mbox{Id}\right)\frac{dt}{t^{1+s}},
\]
where $s\in (0,1)$
and $e^{t\Delta_{\gamma}}$ denotes the Ornstein-Uhlenbeck semigroup on $X$.

For non local PDEs involving the fractional laplacian it is by now classical 
to use the so-called Caffarelli-Silvestre extension (see \cite{CS}). 
Here we use a general formulation of it, 
due to Stinga and Torrea \cite{TS}, which can easily adapted 
to our infinite dimensional setting. 
More precisely, a consequence of their main result is the following: 

\begin{theorem}\label{extension}
Let $u \in {\rm dom}((-\Delta_\gamma)^s)$. A solution of the extension problem 
\begin{equation}\label{pbextension}
\left \{ \begin{array}{ll}
\displaystyle{\Delta_\gamma v+ \frac{1-2s}{y}\partial_yv + \partial^2_yv=0}\qquad &\mbox{on } \, X\times (0, +\infty)
\\ \\ 
v(x,0)=u &\mbox{on } \, X, \\
\end{array} \right . 
\end{equation}
is given by 
\[
v(x,y)=\frac{1}{\Gamma(s)}\int_0^\infty e^{t\Delta_\gamma} ((-\Delta_\gamma)^su)(x)e^{-y^2/4t}\frac{dt}{t^{1-s}}
\]
and furthermore, one has in $L^2_\gamma(X)$ 
\begin{equation}\label{eqcs}
-\lim_{y\to 0^+} y^{1-2s}\partial_yv(x,y)=
\frac{2s\Gamma(-s)}{4^s\Gamma(s)}(-\Delta_\gamma)^s u(x). 
\end{equation}
\end{theorem}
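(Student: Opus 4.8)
The plan is to reduce the statement to the Stinga--Torrea theory \cite{TS}, which applies to a general nonnegative self-adjoint operator $L$ on a measure space. The only thing one must verify is that our $L = -\Delta_\gamma$ on $L^2_\gamma(X)$ fits into that framework and that the functional-analytic formulas quoted there specialize to the ones in the statement. First I would recall that $-\Delta_\gamma$ is nonnegative, self-adjoint, and densely defined on $L^2_\gamma(X)$ (stated already above), so the spectral theorem gives the heat semigroup $e^{t\Delta_\gamma}$, its fractional powers via the formula displayed before the theorem, and the subordination identity that underlies everything, namely for $u\in{\rm dom}((-\Delta_\gamma)^s)$
\[
(-\Delta_\gamma)^{-s}\bigl((-\Delta_\gamma)^s u\bigr)
= \frac{1}{\Gamma(s)}\int_0^\infty e^{t\Delta_\gamma}\bigl((-\Delta_\gamma)^s u\bigr)\,\frac{dt}{t^{1-s}}=u .
\]

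Next I would \emph{verify that the claimed $v$ solves the PDE}. Set $g:=(-\Delta_\gamma)^s u\in L^2_\gamma(X)$ and
\[
v(x,y)=\frac{1}{\Gamma(s)}\int_0^\infty \bigl(e^{t\Delta_\gamma}g\bigr)(x)\,e^{-y^2/4t}\,\frac{dt}{t^{1-s}} .
\]
Applying $\Delta_\gamma$ under the integral (justified because $\partial_t e^{t\Delta_\gamma}g=\Delta_\gamma e^{t\Delta_\gamma}g$ with the needed integrability in $t$ away from $y=0$), one gets $\Delta_\gamma v = \frac{1}{\Gamma(s)}\int_0^\infty \partial_t(e^{t\Delta_\gamma}g)\,e^{-y^2/4t}\,t^{s-1}\,dt$; integrating by parts in $t$ and carrying out the elementary differentiations of the kernel $e^{-y^2/4t}t^{s-1}$ in $y$ produces exactly the Bessel-type combination $\partial_y^2 v + \frac{1-2s}{y}\partial_y v$. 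This is the standard computation in \cite{CS,TS} and goes through verbatim since only the scalar kernel is differentiated. For the trace, $v(x,y)\to \frac{1}{\Gamma(s)}\int_0^\infty e^{t\Delta_\gamma}g\,t^{s-1}\,dt = (-\Delta_\gamma)^{-s}g = u$ in $L^2_\gamma(X)$ as $y\to 0^+$, by the subordination identity above and dominated convergence.

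Finally I would \emph{compute the Neumann-type datum} \eqref{eqcs}. Differentiating $v$ in $y$ gives $\partial_y v(x,y) = \frac{1}{\Gamma(s)}\int_0^\infty (e^{t\Delta_\gamma}g)(x)\,\bigl(-\tfrac{y}{2t}\bigr)e^{-y^2/4t}\,t^{s-1}\,dt$, so
\[
-y^{1-2s}\partial_y v(x,y)=\frac{1}{2\Gamma(s)}\int_0^\infty (e^{t\Delta_\gamma}g)(x)\,y^{2-2s}\,e^{-y^2/4t}\,t^{s-2}\,dt ;
\]
the substitution $\tau = y^2/4t$ turns the $t$-integral into $\frac{1}{2\Gamma(s)}\bigl(\int_0^\infty 4^{s-1}\tau^{-s}e^{-\tau}\,d\tau\bigr)(e^{0}g)(x) = \frac{\Gamma(1-s)}{4^{1-s}\cdot 2\Gamma(s)}g(x)$ in the limit $y\to 0^+$ (using continuity of $t\mapsto e^{t\Delta_\gamma}g$ at $t=0$ and dominated convergence). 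Rewriting $\Gamma(1-s)/4^{1-s} = 2s\Gamma(-s)\cdot 4^{-s}/2$ via $\Gamma(1-s) = -s\Gamma(-s)$ — wait, I should present the constant as in \cite{TS}: one has $\Gamma(1-s)=-s\,\Gamma(-s)$ is false; rather $\Gamma(1-s)=s\cdot\frac{\Gamma(1-s)}{s}$ and the correct identity $\Gamma(s)\Gamma(1-s)=\pi/\sin\pi s$ together with $\Gamma(-s)=-\frac{1}{s}\Gamma(1-s)$ gives, after simplification, exactly the constant $\frac{2s\,\Gamma(-s)}{4^s\Gamma(s)}$ (up to the sign which is absorbed by the sign in front), matching \eqref{eqcs}. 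The main obstacle is purely bookkeeping: one must check that all the interchanges of $\Delta_\gamma$, the $t$-integral, and the $y\to 0^+$ limit are licit in $L^2_\gamma(X)$ for $u$ merely in ${\rm dom}((-\Delta_\gamma)^s)$. Since $-\Delta_\gamma$ is a bona fide nonnegative self-adjoint operator, every one of these steps is a special case of the abstract result of Stinga and Torrea, so no genuinely new analysis is needed beyond citing \cite{TS} and recording the spectral-calculus identities above; the infinite-dimensionality of $X$ never enters, as everything is expressed through the semigroup.
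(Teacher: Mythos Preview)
Your approach is exactly the paper's: the theorem is stated there as a direct consequence of the abstract Stinga--Torrea result \cite{TS}, with no further proof given, since $-\Delta_\gamma$ is a nonnegative self-adjoint operator on $L^2_\gamma(X)$ and the infinite-dimensionality of $X$ plays no role once one works through the semigroup. Your explicit verification of the PDE and the trace is more than the paper does and is fine.

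One genuine slip, though: in your computation of the constant you write that ``$\Gamma(1-s)=-s\,\Gamma(-s)$ is false'' and then reach for the reflection formula. In fact $\Gamma(1-s)=-s\,\Gamma(-s)$ is \emph{correct}: it is just the functional equation $\Gamma(z+1)=z\,\Gamma(z)$ with $z=-s$. Using it directly, your limit
\[
\frac{4^{1-s}}{2\,\Gamma(s)}\,\Gamma(1-s)\,g
\;=\;\frac{2\,\Gamma(1-s)}{4^{s}\,\Gamma(s)}\,g
\;=\;\frac{-2s\,\Gamma(-s)}{4^{s}\,\Gamma(s)}\,(-\Delta_\gamma)^s u
\]
already matches \eqref{eqcs} up to the sign convention in the statement (note $\Gamma(-s)<0$ for $s\in(0,1)$). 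There is no need for $\Gamma(s)\Gamma(1-s)=\pi/\sin\pi s$; your detour there and the intermediate identity $\Gamma(1-s)/4^{1-s}=2s\,\Gamma(-s)\cdot 4^{-s}/2$ are where the algebra goes astray. Fix that one line and the argument is clean.
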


After defining the fractional laplacian, let us introduce the fractional Sobolev space 
\[
H_\gamma^s(X)=\Bigl\{u\in L^2_\gamma(X):\ [u]_{H_\gamma^s}<\infty\Bigr\} 
\]
where
\begin{equation}\label{defHdot}
[u]_{H_\gamma^s}^2 = 
\inf\Bigl\{\int_{X\times{\mathbb R}^+}
\left(|\nabla_{\gamma} v|_H^2+ |\partial_y v|^2\right)
y^{1-2s}d\gamma(x)dy:\ 
v\in H^1_{\rm loc}(X\times{\mathbb R}^+),\ v(\cdot, 0)=u(\cdot)\Bigr\}.
\end{equation}
The space $H_\gamma^s$ is endowed with the Hilbert norm 
\[
\|u\|^2_{H_\gamma^s}=\|u\|^2_{L^2_\gamma}+[u]_{H_\gamma^s}^2.
\] 
\begin{remark}\rm
Let us define the space 
\[
H^1(X\times{\mathbb R}^+\!,\gamma\otimes y^{1-2s}dy)=\Bigl\{ 
v\in H^1_{\rm loc}(X\times{\mathbb R}^+): \!
\int_{X\times{\mathbb R}^+}\!\!\!\!
\left(|v|^2+|\nabla_{\gamma} v|_H^2+ |\partial_y v|^2\right)
y^{1-2s}d\gamma(x)dy <\infty\Bigr\}.
\]
A function $u\in L^2_\gamma(X)$ belongs to $H_\gamma^s$ if and only if there is 
$v_u\in H^1(X\times{\mathbb R}^+,\gamma\otimes y^{1-2s}dy)$ such that the 
infimum in \eqref{defHdot} is attained by $v_u$. We may therefore define the inner product
\[
\langle u,w\rangle_{\dot{H}_\gamma^s}=\int_{X\times{\mathbb R}^+}
\big( [\nabla_{\gamma} v_u,\nabla_{\gamma} v_w]_H
+ \partial_y v_u \,\partial_y v_w
\big) y^{1-2s}d\gamma(x)dy,\qquad u,w\in H_\gamma^s(X).
\]
\end{remark}

We relate in the next lemma the fractional laplacian 
with the spaces described above.
\begin{lemma}\label{Euler}
for every $u,w\in H_\gamma^s$ with $u\in {\rm dom} ((-\Delta_\gamma)^s)$ the following equality holds:
\[
\langle u,w\rangle_{\dot{H}_\gamma^s}= c_s \int_X (-\Delta_\gamma)^su\,w\ d\gamma,
\]
where $c_s$ is the constant in \eqref{eqcs}.
\end{lemma}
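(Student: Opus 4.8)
The plan is to deduce the identity from the Caffarelli--Silvestre extension. By Theorem \ref{extension} together with the remark preceding the statement, the function $v_u$ attaining the infimum in \eqref{defHdot} is exactly the solution of the extension problem \eqref{pbextension} with datum $u$, and likewise $v_w$ is the extension of $w$; moreover, since $u\in{\rm dom}((-\Delta_\gamma)^s)$, the boundary identity \eqref{eqcs} holds for $v_u$. The first step is to rewrite the extension equation in divergence form: multiplying the equation in \eqref{pbextension} by $y^{1-2s}$ gives
\[
y^{1-2s}\Delta_\gamma v_u+\partial_y\bigl(y^{1-2s}\partial_y v_u\bigr)=0\qquad\text{on }X\times(0,+\infty).
\]

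Next I would multiply this identity by $v_w$ and integrate over $X\times(0,+\infty)$ against $d\gamma(x)\,dy$. For the term coming from $\Delta_\gamma v_u$, I integrate by parts in the $x$ variable for a.e.\ fixed $y$, using \eqref{inp} in the form valid on $W^{1,2}_\gamma(X)$, which produces $-\int_{X\times{\mathbb R}^+}y^{1-2s}[\nabla_\gamma v_u,\nabla_\gamma v_w]_H\,d\gamma\,dy$. For the term $\int(\partial_y(y^{1-2s}\partial_y v_u))\,v_w$, I integrate by parts in $y$ on $(0,+\infty)$: the boundary contribution at $y=+\infty$ vanishes by the integrability built into $H^1(X\times{\mathbb R}^+,\gamma\otimes y^{1-2s}dy)$, while the boundary contribution at $y=0$ equals, by \eqref{eqcs},
\[
-\lim_{y\to 0^+}\Bigl(\int_X y^{1-2s}\partial_y v_u(x,y)\,w(x)\,d\gamma(x)\Bigr)=c_s\int_X(-\Delta_\gamma)^s u\,w\,d\gamma,
\]
and the remaining volume term is $-\int_{X\times{\mathbb R}^+}y^{1-2s}\partial_y v_u\,\partial_y v_w\,d\gamma\,dy$. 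Adding the two contributions and using that the left-hand side is zero yields
\[
\int_{X\times{\mathbb R}^+}\bigl([\nabla_\gamma v_u,\nabla_\gamma v_w]_H+\partial_y v_u\,\partial_y v_w\bigr)y^{1-2s}\,d\gamma\,dy=c_s\int_X(-\Delta_\gamma)^s u\,w\,d\gamma,
\]
and the left-hand side is precisely $\langle u,w\rangle_{\dot{H}_\gamma^s}$ by the definition recalled in the remark.

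The delicate point is to justify the two integrations by parts in the weighted, infinite-dimensional setting. Concretely one has to check that $v_u(\cdot,y)\in W^{1,2}_\gamma(X)$ for a.e.\ $y>0$ so that \eqref{inp} applies with $v_w(\cdot,y)$ as test function, that all the integrals appearing converge (this follows from $v_u,v_w\in H^1(X\times{\mathbb R}^+,\gamma\otimes y^{1-2s}dy)$ and the mapping properties of $e^{t\Delta_\gamma}$ and $(-\Delta_\gamma)^s$), and above all that the flux $y^{1-2s}\partial_y v_u$ has the stated trace at $y=0$ when paired against $w$ in $L^2_\gamma(X)$ --- which is exactly the content of \eqref{eqcs}, now used not merely pointwise but after integration against $w$. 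A safe way to organise the argument, if one prefers to avoid manipulating traces directly, is to first prove the identity for $u,w$ ranging over a dense class (finite linear combinations of the eigenfunctions of $-\Delta_\gamma$, where $v_u$ is given explicitly and all integrals reduce to convergent one-dimensional ones), and then pass to the limit using that both sides are continuous with respect to the $H_\gamma^s$-norm.
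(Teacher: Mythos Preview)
Your proposal is correct and follows essentially the same route as the paper: test the extension equation satisfied by $v_u$, integrate by parts in $x$ and in $y$, and identify the boundary flux at $y=0$ via \eqref{eqcs}. The only cosmetic difference is that the paper works first with tensor-product test functions $\varphi(x)\psi(y)$, $\varphi\in\mathcal{F}C_b^\infty(X)$, $\psi\in C_c^\infty(\mathbb{R})$, and then invokes density in $H^s_\gamma$, whereas you pair directly against $v_w$ and only afterwards suggest a density argument (via eigenfunctions) to justify the manipulations; both variants achieve the same end.
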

\begin{proof}
For $u,w\in H_\gamma^s(X)$, let $v_u$ be as above. It easily follows from the minimality
and elliptic regularity that $v_u$ is a solution of problem \eqref{pbextension}. Indeed, let us 
consider the test function 
$\varphi(x)\psi(y)$ with $\varphi\in {\mathcal F}C^\infty_b(X)$ and 
$\psi\in C_c^\infty({\mathbb R})$; we have 
\begin{align*}
\langle u,\varphi\psi\rangle_{\dot{H}_\gamma^s}=& 
\int_{X\times{\mathbb R}^+} 
\Bigl[ [\nabla_{\gamma} v_u,\nabla_\gamma \varphi(x)]_H\psi(y) 
+\varphi(x)\partial_yv_u\psi'(y)\Bigr] y^{1-2s}d\gamma(x)dy
\\
=&\int_{X\times{\mathbb R}^+} 
\bigl(-\Delta_\gamma v_u-\partial_y^2v_u -\frac{1-2s}{y}\partial_yv_u\bigr)
\varphi(x)\psi(y) y^{1-2s}d\gamma(x)dy
\\
&- \int_X\lim_{y\to 0+}(y^{1-2s} \partial_yv_u(x,y)\psi(y))\varphi(x)\, d\gamma(x).
\end{align*}
Since $v_u(\cdot,0)=u(\cdot)$, from \eqref{eqcs} and the density 
of the test functions in $H^s_\gamma$ 
we obtain the thesis. 
\end{proof}

We are now ready to define the fractional perimeter of a set in $X$. 

\begin{definition}
For every measurable set $E\subset X$ ans $0<s<1$ we define the fractional $s$-perimeter by setting 
\[
P_{\gamma,s}(E)=[\chi_E]_{H_\gamma^s}
\]
according to \eqref{defHdot}. We say that $E$ has finite $s$-perimeter in 
$X$ if $P_{\gamma,s}(E)<\infty$.
\end{definition}

Let us show that a form of the coarea formula holds 
in this framework as well (see \cite{Visintin}). 

\begin{proposition}\label{prop.coarea}
Setting for $u\in L^1_\gamma(X)$
\[
V_s(u) = \int_{\mathbb R}P_{\gamma,s}(\{u>t\})\, dt, 
\]
$V_s$ is convex and lower semicontinuous on $L^1_\gamma(X)$. Moreover, 
if $u_n={\mathbb E}_n[u]$ are the canonical cylindrical approximation of $u$ then 
$V_s(u)\leq V_s(u_n)$.  
\end{proposition}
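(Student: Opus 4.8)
The plan is to deduce the three assertions from properties of the set functional $E\mapsto P_{\gamma,s}(E)$, working from the double–integral (Dirichlet form) representation
\[
P_{\gamma,s}(E)^{2}=[\chi_E]_{H^{s}_{\gamma}}^{2}=c\iint_{X\times X}\bigl(\chi_E(x)-\chi_E(y)\bigr)^{2}K_{s}(x,y)\,d\gamma(x)\,d\gamma(y),\qquad K_{s}(x,y):=\int_{0}^{\infty}p_{t}(x,y)\,\frac{dt}{t^{1+s}},
\]
where $p_{t}$ is the Mehler kernel of $e^{t\Delta_{\gamma}}$ and $c>0$. I would get this from Lemma~\ref{Euler} with $u=w=\chi_E$ (first for $\chi_E\in\mathrm{dom}((-\Delta_{\gamma})^{s})$, then by density in $H^{s}_{\gamma}$), expanding $(-\Delta_{\gamma})^{s}$ through the subordination formula and using that $e^{t\Delta_{\gamma}}$ is symmetric, Markovian and mass preserving. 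Since $(\chi_E(x)-\chi_E(y))^{2}\in\{0,1\}$, this equals $\sqrt c\,\mathcal E(\chi_E)^{1/2}$, where $\mathcal E$ is the nonlocal Dirichlet form with jump kernel $K_{s}$ and equilibrium measure $\gamma$.

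For the lower semicontinuity: if $\gamma(E_{n}\triangle E)\to0$, then along a subsequence $\chi_{E_{n}}\to\chi_E$ $\gamma$–a.e., hence $(\chi_{E_{n}}(x)-\chi_{E_{n}}(y))^{2}\to(\chi_E(x)-\chi_E(y))^{2}$ $(\gamma\otimes\gamma)$–a.e.\ on $X\times X$, and Fatou's lemma in the representation above gives $P_{\gamma,s}(E)\le\liminf_{n}P_{\gamma,s}(E_{n})$; arbitrariness of the subsequence yields $L^{1}_{\gamma}$–lower semicontinuity of $P_{\gamma,s}$ on sets. To pass to $V_{s}$: given $u_{n}\to u$ in $L^{1}_{\gamma}(X)$, pass to a subsequence with $u_{n}\to u$ a.e.; for every $t$ outside the at most countable set $\{t:\gamma(u=t)>0\}$ one has $\chi_{\{u_{n}>t\}}\to\chi_{\{u>t\}}$ a.e., so $\liminf_{n}P_{\gamma,s}(\{u_{n}>t\})\ge P_{\gamma,s}(\{u>t\})$, and a second Fatou's lemma in the variable $t$ gives $\liminf_{n}V_{s}(u_{n})\ge V_{s}(u)$; again the choice of subsequence is immaterial.

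For the convexity I would invoke the coarea criterion of Visintin quoted above: $u\mapsto\int_{\mathbb R}F(\{u>t\})\,dt$ is convex on $L^{1}_{\gamma}(X)$ as soon as $F$ is submodular, so it suffices to prove $P_{\gamma,s}(A\cup B)+P_{\gamma,s}(A\cap B)\le P_{\gamma,s}(A)+P_{\gamma,s}(B)$. Writing $\chi_{A\cup B}=\max(\chi_A,\chi_B)$, $\chi_{A\cap B}=\min(\chi_A,\chi_B)$ and integrating against $K_{s}\,d\gamma\otimes d\gamma$ the elementary inequality
\[
\bigl(\max(a,b)-\max(a',b')\bigr)^{2}+\bigl(\min(a,b)-\min(a',b')\bigr)^{2}\le(a-a')^{2}+(b-b')^{2}\qquad(a,b,a',b'\in\mathbb R),
\]
(monotone rearrangement of a pair is a contraction of $\mathbb R^{2}$) gives $\mathcal E(\chi_{A\cup B})+\mathcal E(\chi_{A\cap B})\le\mathcal E(\chi_A)+\mathcal E(\chi_B)$; combining it with the two pointwise identities $\chi_{A\cup B}+\chi_{A\cap B}=\chi_A+\chi_B$ and $|\chi_{A\cup B}-\chi_{A\cap B}|=|\chi_A-\chi_B|$ and with the Dirichlet contraction $\mathcal E(|f|)\le\mathcal E(f)$, one upgrades it to $\mathcal E(\chi_{A\cup B}\pm\chi_{A\cap B})\le\mathcal E(\chi_A\pm\chi_B)$ and finally to submodularity of $\sqrt{\mathcal E}=P_{\gamma,s}/\sqrt c$. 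I expect this last step — passing from submodularity of the quadratic form $\mathcal E$ to submodularity of its square root evaluated on indicators — to be the main obstacle, since it fails for a generic submodular functional and must use the precise two–term structure of $\max$ and $\min$ together with the above contractions; it is presumably why the statement is isolated as a proposition.

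Finally, the comparison between $V_{s}(u)$ and $V_{s}(u_{n})$ along the cylindrical approximation follows from Jensen's inequality for the conditional expectation $\mathbb E_{n}=\mathbb E_n$, which is a self–adjoint $L^{1}_\gamma$–contraction with $\mathbb E_{m}\circ\mathbb E_{n}=\mathbb E_{\min(m,n)}$, applied to the convex, lower semicontinuous functional $V_{s}$. Concretely, using the polar description $V_{s}(u)=\sup\{\langle u,\psi\rangle:\psi\in\partial V_{s}(0)\}$ with $\partial V_{s}(0)=\{\psi\in L^{\infty}_{\gamma}(X):\int_X\psi\,d\gamma=0,\ \int_E\psi\,d\gamma\le P_{\gamma,s}(E)\ \forall E\}$ (obtained from the layer–cake formula and $V_s(\chi_E)=P_{\gamma,s}(E)$), the point is the stability of $\partial V_{s}(0)$ under $\mathbb E_{n}$, which by self–adjointness reduces to the inequality for indicators, $V_{s}(\mathbb E_{n}\chi_E)\le P_{\gamma,s}(E)$, handled through the extension technique and the fiberwise Ehrhard symmetrisation of Lemma~\ref{ehrhard}. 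Together with the lower semicontinuity of the previous step and $u_{n}\to u$ in $L^{1}_{\gamma}(X)$, this yields that $n\mapsto V_{s}(u_{n})$ is monotone with limit $V_{s}(u)$, which is the asserted relation between $V_{s}(u)$ and $V_{s}(u_{n})$.
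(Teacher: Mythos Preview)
Your treatment of lower semicontinuity is sound and more detailed than the paper's, which simply remarks that it ``easily follows from the lower semicontinuity of perimeters''. The real issues are in the convexity step and in the last part.

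\textbf{Convexity.} You correctly reduce, via Visintin's criterion, to submodularity of $P_{\gamma,s}$, and you do establish submodularity of the quadratic form on indicators, i.e.\ $\mathcal{E}(\chi_{A\cup B})+\mathcal{E}(\chi_{A\cap B})\le \mathcal{E}(\chi_A)+\mathcal{E}(\chi_B)$. But the passage to $\sqrt{\mathcal{E}}$ that you yourself flag as ``the main obstacle'' is not closed by the two pointwise identities you list. Those identities, combined with the parallelogram law for the quadratic form, only reproduce the inequality $p^{2}+q^{2}\le r^{2}+s^{2}$ (with $p=\sqrt{\mathcal{E}(\chi_{A\cup B})}$, $q=\sqrt{\mathcal{E}(\chi_{A\cap B})}$, $r=\sqrt{\mathcal{E}(\chi_A)}$, $s=\sqrt{\mathcal{E}(\chi_B)}$), and in general $p^{2}+q^{2}\le r^{2}+s^{2}$ does \emph{not} imply $p+q\le r+s$: take $p=q=10$, $r=14.2$, $s=0$. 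So the argument for submodularity of $P_{\gamma,s}=\sqrt{c\,\mathcal{E}}$ is genuinely incomplete as written. The paper avoids this entirely by citing \cite[Proposition~3.4]{ChaGiaLus} for the convexity of $V_s$.

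\textbf{The inequality for cylindrical approximations.} The paper's proof is one sentence: once $V_s$ is convex (and l.s.c.), apply Jensen's inequality to the conditional expectation $\mathbb{E}_n$. Your opening line says exactly this, and that is all that is needed. The subsequent elaboration through the polar description of $V_s$, the subdifferential $\partial V_s(0)$, and above all the appeal to Lemma~\ref{ehrhard} is both unnecessary and misplaced: Ehrhard symmetrisation is a rearrangement along a fixed Cameron--Martin direction, not the fibre-averaging performed by $\mathbb{E}_n$, so that lemma has nothing to do with the inequality $V_s(\mathbb{E}_n\chi_E)\le P_{\gamma,s}(E)$. Note also that the Jensen argument (yours and the paper's alike) delivers $V_s(u_n)=V_s(\mathbb{E}_n u)\le V_s(u)$, i.e.\ the reverse of the inequality printed in the statement; this is consistent with the one-line proof given in the paper and appears to be a typographical slip there.
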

\begin{proof}
The convexity of $V_s$ has been proved in 
\cite[Proposition 3.4]{ChaGiaLus}, while the 
lower semicontinuity easily follows from the lower semicontinuity 
of perimeters. The last 
inequality follows immediately from Jensen's inequality. 
\end{proof}

\section{The fractional isoperimetric problem}

In order to discuss the isoperimetric properties of half-spaces, 
following \cite{EhrScand} we introduce a suitable 
notion of symmetrisation. 
For $h\in H$ with $|h|_H=1$, we consider the projection
$x'=\pi_hx=x-\hat{h}(x)h$ and write $x=x'+th$ with $t\in{\mathbb R}$. 
Therefore, for fixed $h\in H$ and for any $I\subset{\mathbb R}$ we set 
\begin{equation}\label{defI*}
I^*=(-\infty, \phi^{-1}(\gamma_1(I)), \qquad\text{where}\quad \phi(t)=\int_{-\infty}^t e^{-s^2/2}ds.
\end{equation}
In the same vein, for every measurable function $u:X\to{\mathbb R}$ we define the symmetrised function 
\begin{equation}\label{defu*}
u_h^*(x'+th)=\sup\Bigl\{c\in {\mathbb R}:\ t\in\{u(x',\cdot)>c\}^*\Bigr\}. 
\end{equation}
Since symmetrisation preserves characteristic functions, we may define the set 
$E_h^*$ through the equality 
\[
\chi_{E_h^*}=(\chi_E)_h^* .
\]
The proof of Theorem \ref{thisoperimetric} relies on the following lemma.

\begin{lemma}\label{brock}
Let $v\in H^1(X\times{\mathbb R}^+,\gamma\otimes y^{1-2s}dy)$ and let $h\in X^*$ with $|h|_H=1$. 
Let $v^*_h$ be as in \eqref{defu*} and let
\[
J_1(v):=\int_{X\times {\mathbb R}^+} |\partial_yv|^2\, y^{1-2s}d\gamma(x)dy.
\]
Then we have the inequality $J_1(v_h^*) \leq J_1(v)$.  
\end{lemma}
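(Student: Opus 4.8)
The plan is to exploit the product structure: $J_1$ involves only the extension variable $y$, while the Ehrhard symmetrisation $v_h^*$ acts separately on each one-dimensional $h$-fibre of the $x$-variable, so the inequality reduces to a statement about how monotone rearrangement affects the \emph{transverse} derivative of a curve. Concretely, writing $x=x'+th$ with $x'=\pi_hx$, the measure factors as $\gamma=\gamma'\otimes\gamma_1$ with $\gamma_1$ the standard Gaussian on $\R$ (here $|h|_H=1$ is used; see Section~\ref{secnot}). For $\gamma'$-a.e.\ $x'$ put $f(t,y):=v(x'+th,y)$; by Fubini $f\in H^1(\R\times\R^+,\gamma_1\otimes y^{1-2s}dy)$ for a.e.\ $x'$, and by \eqref{defI*}--\eqref{defu*} the slice $t\mapsto v_h^*(x'+th,y)$ is, for each fixed $y$, precisely the decreasing rearrangement $f^*(\cdot,y)$ of $t\mapsto f(t,y)$ with respect to $\gamma_1$. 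Since
\[
J_1(v)=\int_{\pi_h(X)}\Big(\int_{\R^+}\!\int_{\R}|\partial_y f(t,y)|^2\,d\gamma_1(t)\,y^{1-2s}\,dy\Big)\,d\gamma'(x'),
\]
and similarly for $J_1(v_h^*)$ with $f$ replaced by $f^*$, it suffices to establish, for each admissible slice $f$, the pointwise bound $\|\partial_yf^*(\cdot,y)\|_{L^2(\gamma_1)}^2\le\|\partial_yf(\cdot,y)\|_{L^2(\gamma_1)}^2$ for a.e.\ $y>0$ --- and in particular that $\partial_yf^*(\cdot,y)$ exists in $L^2(\gamma_1)$.

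The heart of the matter is that the Ehrhard rearrangement $R\colon g\mapsto g^*$ is \emph{non-expansive} from $L^2(\R,\gamma_1)$ to itself. Indeed $g^*(t)=G\big(\gamma_1((-\infty,t])\big)$, where $G$ is the decreasing rearrangement of $g$ with respect to $\gamma_1$ on $(0,1)$; the substitution $m=\gamma_1((-\infty,t])$ gives $\|g_1^*-g_2^*\|_{L^2(\gamma_1)}=\|G_1-G_2\|_{L^2(0,1)}\le\|g_1-g_2\|_{L^2(\gamma_1)}$, the final inequality being the classical non-expansivity of equimeasurable rearrangement. Since the weight $y^{1-2s}$ is locally bounded away from $0$ and $\infty$ on $(0,\infty)$, membership of $f$ in the weighted $H^1$ space forces the curve $y\mapsto f(\cdot,y)$ to have a locally absolutely continuous representative from $(0,\infty)$ into $L^2(\gamma_1)$, whose metric derivative equals $\|\partial_yf(\cdot,y)\|_{L^2(\gamma_1)}$ a.e. Composing with the $1$-Lipschitz map $R$, the curve $y\mapsto f^*(\cdot,y)$ is again locally absolutely continuous into $L^2(\gamma_1)$ with metric derivative bounded a.e.\ by $\|\partial_yf(\cdot,y)\|_{L^2(\gamma_1)}$; and, since an absolutely continuous $L^2(\gamma_1)$-valued curve is a.e.\ differentiable with the norm of its derivative equal to its metric derivative, this yields both the existence of $\partial_yf^*(\cdot,y)$ in $L^2(\gamma_1)$ and the desired pointwise inequality. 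Multiplying by $y^{1-2s}$, integrating in $y$ and then in $x'$ --- and using that $R$ preserves the $L^2(\gamma_1)$-norm of each slice, so that $v_h^*$ lies in the same weighted $L^2(X\times\R^+)$ space as $v$ and its fibrewise $y$-derivatives reassemble into the distributional $\partial_y v_h^*$ --- gives $J_1(v_h^*)\le J_1(v)$.

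I expect the true difficulties to be technical rather than conceptual, concentrated in two places. First, the regularity claim in the previous paragraph --- that $v_h^*$ has a square-integrable $y$-derivative at all --- is exactly the point where a naive ``differentiate the explicit rearrangement'' would break down, and the metric-derivative viewpoint is what makes it clean; this I regard as the crux. Second, there is the bookkeeping: legitimacy of the $h$-fibre decomposition and of Fubini for the weighted product measure $\gamma\otimes y^{1-2s}dy$ in infinite dimensions, measurability in $x'$ of the fibre energies, and a preliminary reduction (truncate $v$ to $|v|\le N$, which commutes with rearrangement and does not increase $J_1$, then let $N\to\infty$ by monotone convergence; or mollify in $y$ and pass to the limit using lower semicontinuity of $J_1$ under $L^2_{\mathrm{loc}}$-convergence together with the $L^2$-continuity of $R$). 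If one prefers a computational route in the spirit of \cite{Brock}, one sets $\Psi(y,c)=\gamma_1(\{f(\cdot,y)>c\})$, differentiates in $y$ via the coarea formula in $t$ to write $\partial_y\Psi$ as an integral of $\partial_yf/|\partial_tf|$ over $\{f(\cdot,y)=c\}$, recovers $\partial_yf^*$ from $\partial_y\Psi/\partial_c\Psi$, and closes the estimate with the Cauchy--Schwarz inequality on the level set --- but this needs the same smoothing and reassembly steps.
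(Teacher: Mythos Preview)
Your proposal is correct but takes a genuinely different route from the paper. The paper follows Brock's computational strategy: after the same fibrewise reduction to a two-variable function $f(t,y)$, it restricts to a dense class of ``nice'' (piecewise affine) functions, parametrises the level sets $\{f(\cdot,y)=z\}$ by finitely many inverse branches $t^j_k(y,z)$, writes out $\partial_y f$, $\partial_y f^*$ explicitly in these coordinates, and closes the inequality by Cauchy--Schwarz applied to the branches. Your argument instead abstracts the core fact --- that Ehrhard rearrangement $R\colon L^2(\gamma_1)\to L^2(\gamma_1)$ is $1$-Lipschitz --- and then views $y\mapsto f(\cdot,y)$ as an absolutely continuous curve in the Hilbert space $L^2(\gamma_1)$; composing with $R$ cannot increase the metric derivative, and the Radon--Nikodym property of Hilbert spaces identifies the metric derivative with $\|\partial_y f^*(\cdot,y)\|_{L^2(\gamma_1)}$. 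Your approach is cleaner and avoids both the nice-function reduction and the explicit level-set bookkeeping; the paper's approach is more elementary in that it uses no Banach-space-valued calculus, and its Cauchy--Schwarz step is precisely a hands-on instance of the non-expansivity you invoke as a black box. The alternative ``computational route'' you sketch at the end (coarea in $t$, recover $\partial_y f^*$ from $\partial_y\Psi/\partial_c\Psi$, then Cauchy--Schwarz on the level set) is essentially the paper's argument.
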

\begin{proof} The proof follows that of Theorem 1 in \cite{Brock} with minor modifications, 
we repeat it for the reader's convenience. There are some differences: Brock's result is in finite 
dimensions, the underlying measure is the Lebesgue one and he uses the Steiner symmetrisation, 
whereas we work in $X\times{\mathbb R}^+$ with the product measure $\gamma\otimes y^{1-2s}dy$
and we are concerned with the Ehrhard symmetrisation. On the other hand, the functionals 
considered by Brock are much more general than ours. \\
In order to simplify the notation, suppose that $h=h_1$, write as before $x=x'+th_1$ split 
$X=H_1\oplus X_1^\perp$ and decompose the gaussian measure as $\gamma=\gamma_1\otimes\gamma_1^\perp$. 
Since for every $v\in H^1(X\times{\mathbb R}^+,\gamma\otimes y^{1-2s}dy)$ we have
\[
J_1(v) = \int_{X_1^\perp}\Bigl(\int_{\mathbb R}\int_{{\mathbb R}^+}
|\partial_yv(x',t,y)|^2y^{1-2s}d\gamma_1(t)dy\Bigr)d\gamma_1^\perp(x'),
\]
we may limit ourselves to the inner double integral, for fixed $x'$. Moreover, 
following the reduction explained in \cite{Brock}, we may deal only with the dense class of {\em nice functions}, i.e., 
piecewise affine functions $v:\R\times\R^+\to\R$ such that 
for every $c>\inf v$ the equation $v(t,y)=c$ has for every $y\in\R^+$ a finite (even) 
number of solutions $t_1,\ldots,t_{2m}$. Once the result is proved for nice functions, the 
general case follows as in \cite{Brock}. For $v$ nice, set $\Omega=\{v>0\}$ and 
decompose the vertical set 
$\{(y,z)\in\R^+\times\R^+:\ \exists\, (t,y)\in\Omega\text { such that } v(t,y)=z\}$ 
into $N$ disjoint domains $G_j$ such that for any $(y,z)\in G_j$ the equation $v(t,y)=z$ 
has exactly $2m$ (with $m$ depending on $j$) solutions $t=t^j_k,\ k=1,\ldots,2m(j)$. 
Thus $v$ can be represented in each $G_j$ by the inverse functions $t=t^j_k(y,v)$. In each 
domain $G_j$ the following identities hold:
\begin{align*}
\partial_t v(t^j_k,y)&=
\Bigl(\frac{\partial t^j_k}{\partial v}\Bigr)^{-1}
\left\{\begin{array}{ll}
        >0\quad &\text{ if $k$ is odd}
        \\
        <0 &\text{ if $k$ is even}
       \end{array}
\right.\\
\partial_y v(t^j_k,y)&= 
-\frac{\partial t^j_k}{\partial y}\Bigl(\frac{\partial t^j_k}{\partial v}\Bigr)^{-1}.
\end{align*}
Since $v$ is nice, all the derivatives of $t^j_k$ are constant in $G_j$ and therefore 
the rearranged function $v^*$ is nice, too. Moreover the symmetrisation procedure 
reduces the solutions of the equation $v^*(t,y)=z$ to only one, i.e., the following 
\[
T^j= \phi^{-1}\Bigl(\sum_{k=1}^{2m(j)}(-1)^{k-1} \phi(t^j_k) \Bigr)
\]
(where $\phi$ is introduced in \eqref{defI*}) in each $G_j$, for every $y\in\R^+$. 
Differentiating we get 
\begin{align*}
 \gamma_1(T^j)\frac{\partial T^j}{\partial y} &= 
 \sum_{k=1}^{2m(j)}(-1)^{k-1} \gamma (t^j_k) \frac{\partial t^j_k}{\partial y}
\\
 \gamma_1(T^j)\frac{\partial T^j}{\partial z} &= 
 \sum_{k=1}^{2m(j)} \gamma (t^j_k) \Bigl|\frac{\partial t^j_k}{\partial y}\Bigr| .
\end{align*}
It follows (with $x'\in X_1^\perp$ fixed) 
\begin{align*}
 \int_{\mathbb R}\int_{{\mathbb R}^+}|\partial_yv(x',t,y)|^2y^{1-2s}\gamma_1(t)dydt
 &= \sum_{j=1}^N \int_{G_j}\sum_{k=1}^{m(j)}
 \Bigl|\frac{\partial t^j_k}{\partial y}\Bigr|^2\, 
 \Bigl|\frac{\partial t^j_k}{\partial z}\Bigr|^{-1}\gamma_1(t^j_k)dydz
 \\
 \int_{\mathbb R}\int_{{\mathbb R}^+}|\partial_yv^*(x',t,y)|^2y^{1-2s}\gamma_1(t)dydt
 &= \sum_{j=1}^N \int_{G_j}
 \Bigl|\frac{\partial T^j}{\partial y}\Bigr|^2\, 
 \Bigl|\frac{\partial T^j}{\partial z}\Bigr|^{-1}\gamma_1(T^j)dydz
 \\
 &= \sum_{j=1}^N \int_{G_j}
 \frac{\left|\sum_{k=1}^{2m(j)}(-1)^{k-1} \gamma (t^j_k) 
\dfrac{\partial t^j_k}{\partial y}\right|^2} 
 {\left|\,\sum_{k=1}^{2m(j)} \gamma (t^j_k) 
\Bigl|\frac{\partial t^j_k}{\partial y}\Bigr|\, \right|}dydz.
\end{align*}
Setting
\[
 c^j_k = \gamma_1(t^j_k)\frac{\partial t^j_k}{\partial y},
 \qquad 
 b^j_k = \gamma_1(t^j_k)\Bigl|\frac{\partial t^j_k}{\partial z}\Bigr|,
\]
we have the following equivalence:
\begin{align*}
 &\int_{\mathbb R}\int_{{\mathbb R}^+}|\partial_yv(x',t,y)|^2y^{1-2s}\gamma_1(t)dydt
 \geq \int_{\mathbb R}\int_{{\mathbb R}^+}|\partial_yv^*(x',t,y)|^2y^{1-2s}\gamma_1(t)dydt
  \\ 
 &\Longleftrightarrow\qquad\sum_{k=1}^{2m(j)}\frac{(c_k^j)^2}{b^j_k} \geq 
 \Bigl(\sum_{k=1}^{2m(j)}(-1)^{k-1}c^j_k\Bigr)^2\Bigl|\sum_{j=1}^{2m(j)}b^j_k\Bigr|^{-1}
 \ \ \forall\, j=1,\ldots,N.
\end{align*}
But, the last inequality is nothing but the Cauchy-Schwarz inequality:
\[
 \Bigl(\sum_{k=1}^{2m(j)} (-1)^{k-1}c_k\Bigr)^2 = 
 \Bigl(\sum_{k=1}^{2m(j)} (-1)^{k-1}\frac{c_k}{\sqrt{b_k}} \sqrt{b_k}\Bigr)^2
 \leq \sum_{k=1}^{2m(j)} \frac{c_k^2}{b_k} \sum_{k=1}^{2m(j)} b_k
\]
and the thesis follows.
\end{proof}

Let us show that the $L^2_\gamma(X)$ norm of the gradient is also decreasing under Ehrhard rearrangement. 

\begin{lemma}\label{lemGN}
Let $u\in H^{1}_\gamma(X)$, and let $h\in X^*$ with $|h|_H=1$.
Then $u^*_h\in H^{1}_\gamma(X)$ and
\begin{equation}\label{polya}
\int_{X}|\nabla_\gamma u_h^*|_H^2 d\gamma \le
\int_{X}|\nabla_\gamma u|_H^2 d\gamma\,.
\end{equation}
\end{lemma}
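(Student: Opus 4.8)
\textbf{Proof proposal for Lemma \ref{lemGN}.}

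The plan is to reduce the infinite-dimensional Ehrhard symmetrisation inequality \eqref{polya} to its finite-dimensional counterpart via the cylindrical approximation operators $\bEm$, and then to invoke the known finite-dimensional Ehrhard--P\'olya--Szeg\H{o} inequality. First I would recall that symmetrisation along $h=h_1$ touches only the variable $t$ in the splitting $x=x'+th_1$, $X=H_1\oplus X_1^\perp$, $\gamma=\gamma_1\otimes\gamma_1^\perp$, so that $\nabla_\gamma u$ splits as $\partial_1 u\,h_1 + \nabla_\gamma' u$, where $\nabla_\gamma'$ collects the derivatives in the remaining directions. The heart of the matter is the one-variable fact that for fixed $x'$, Ehrhard rearrangement in $t$ does not increase $\int_{\mathbb R}|\partial_1 u|^2 d\gamma_1$, and, by a standard argument (see \cite{EhrScand}), also does not increase $\int_{\mathbb R}|\nabla_\gamma' u|_H^2 d\gamma_1$ for each fixed value of the other coordinates; integrating in $x'$ then yields \eqref{polya} for smooth cylindrical $u$. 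I would therefore begin by treating $u\in\FCbt(X)$, say $u(x)=v(\underline x_m)$ with $v\in C_b^2(\mathbb R^m)$, for which $u^*_h$ is again cylindrical and \eqref{polya} is exactly the P\'olya--Szeg\H{o} inequality for Ehrhard symmetrisation on $\mathbb R^m$ with the standard Gaussian measure.

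Next I would pass from cylindrical functions to general $u\in H^1_\gamma(X)$. Given such $u$, set $u_n:=\bEm[u]$ with $m=n$; these are cylindrical, converge to $u$ in $L^2_\gamma(X)$, and by the contractivity of conditional expectation on the Dirichlet form one has $\int_X |\nabla_\gamma u_n|_H^2 d\gamma \le \int_X|\nabla_\gamma u|_H^2 d\gamma$ for all $n$ (this is Jensen's inequality applied direction by direction, exactly as in the last line of Proposition \ref{prop.coarea}). Applying the cylindrical case to each $u_n$ gives
\[
\int_X|\nabla_\gamma (u_n)^*_h|_H^2 d\gamma \le \int_X|\nabla_\gamma u_n|_H^2 d\gamma \le \int_X|\nabla_\gamma u|_H^2 d\gamma .
\]
To conclude I would show $(u_n)^*_h \to u^*_h$ in $L^2_\gamma(X)$: this follows because the Ehrhard symmetrisation \eqref{defu*} is built from the distribution functions $t\mapsto\gamma_1(\{u(x',\cdot)>c\})$, and $L^2$-convergence of $u_n$ to $u$ forces convergence of these one-dimensional distribution functions for a.e.\ $x'$ and a.e.\ level $c$, hence convergence of the rearranged functions; a uniform integrability bound (from the uniform $L^2$ bound, or from boundedness when $u$ is bounded, with the general case obtained by truncation) upgrades this to $L^2_\gamma$ convergence. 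Then the gradient bound for $u^*_h$ follows from lower semicontinuity of the Dirichlet integral in $H^1_\gamma(X)$ with respect to $L^2_\gamma$ convergence: $u^*_h\in H^1_\gamma(X)$ and \eqref{polya} holds.

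The main obstacle I anticipate is not the finite-dimensional P\'olya--Szeg\H{o} step (which is classical, \cite{EhrScand}) but the two soft analytic points in the limiting argument: first, establishing the contraction $\int|\nabla_\gamma\bEm u|_H^2\,d\gamma\le\int|\nabla_\gamma u|_H^2\,d\gamma$ cleanly — here one should note that $\bEm$ commutes with $\partial_j$ for $j\le m$ and kills the components $\partial_j$ for $j>m$, so the inequality is really Jensen's inequality applied to $|\nabla_\gamma u|_H^2 = \sum_j |\partial_j u|^2$ together with the contractivity of conditional expectation; and second, the continuity of $u\mapsto u^*_h$ along the sequence $u_n$, where the delicate issue is that rearrangement is only continuous under some equi-integrability, so one has to justify interchanging the symmetrisation with the limit. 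Both are handled by standard arguments once one works fiber-wise in $x'$ and uses that $\gamma_1$ is a fixed finite measure on $\mathbb R$; a separate truncation argument reduces the general case to bounded $u$, for which these continuity statements are cleanest.
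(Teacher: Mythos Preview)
Your proposal is correct and follows essentially the same route as the paper: invoke the finite-dimensional Ehrhard--P\'olya--Szeg\H{o} inequality from \cite{EhrAnnENS} for cylindrical functions, approximate a general $u\in H^1_\gamma(X)$ by its canonical cylindrical approximations $u_n=\bEm u$, use $(u_n)^*_h\to u^*_h$ in $L^2_\gamma(X)$, and conclude by lower semicontinuity of the Dirichlet energy. The only cosmetic difference is that the paper uses $u_n\to u$ in $H^1_\gamma(X)$ (so the last inequality is an equality in the limit) whereas you use the contractivity $\int|\nabla_\gamma u_n|_H^2\,d\gamma\le\int|\nabla_\gamma u|_H^2\,d\gamma$; both give the same conclusion, and your extra discussion of the continuity of rearrangement and of the commutation of $\bEm$ with $\partial_j$ simply fills in details the paper leaves implicit.
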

\begin{proof}
In \cite[Th. 3.1]{EhrAnnENS} the inequality \eqref{polya} is proven for Lipschitz
functions in finite dimensions. We extend it by approximation to Sobolev functions in $H^1_\gamma(X)$.

We let $u_n \in \FCb(X)$ be the canonical cylindrical approximation of $u$
defined in \eqref{cancylapprox}.
Since $u_n\to u$ in $H^1_\gamma(X)$, we have
$(u_n)^*_h\to u^*_h$ in $L^2_\gamma(X)$, so that by the lower semicontinuity
of the $H^1_\gamma$ norm we obtain
\[
\int_{X}|\nabla_\gamma u^*_h|_H^2 d\gamma \leq
\liminf_{n\to \infty} \int_{X}|\nabla_\gamma (u_n)^*_h|_H^2 d\gamma
\leq \liminf_{n\to \infty}
\int_{X}|\nabla_\gamma u_n|_H^2 d\gamma
=\int_{X}|\nabla_\gamma u|_H^2 d\gamma\,.
\]
\end{proof}

{}From Lemma \ref{lemGN} we immediately get the following result.

\begin{lemma}\label{ehrhard}
Let $v\in H^1(X\times{\mathbb R}^+,\gamma\otimes y^{1-2s}dy)$ and let $h\in
X^*$ with $|h|_H=1$.
Letting $v^*_h$ be as in \eqref{defu*} and
\[
J_2(v)=\int_{X\times {\mathbb R}^+} |\nabla_{\gamma} v|_H^2\,
y^{1-2s}d\gamma(x)dy,
\]
we have the inequality $J_2(v^*_h) \leq J_2(v)$.
\end{lemma}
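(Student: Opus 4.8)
The plan is to reduce the statement to the finite-dimensional Ehrhard Pólya--Szeg\H{o} inequality of \cite{EhrAnnENS} for the horizontal gradient, treating the extra variable $y\in{\mathbb R}^+$ as a frozen parameter, and then to integrate over $y$ against the weight $y^{1-2s}$. Concretely, for a.e.\ fixed $y>0$ the function $x\mapsto v(x,y)$ lies in $H^1_\gamma(X)$, and the Ehrhard symmetrisation $v^*_h$ acts on each horizontal slice $\{y=\text{const}\}$ exactly as the symmetrisation of that slice: this is immediate from the slicewise definition \eqref{defu*}, since the superlevel sets $\{v(\cdot,y)>c\}$ depend only on the fixed $y$. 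Hence $(v(\cdot,y))^*_h=v^*_h(\cdot,y)$.

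First I would note the Fubini-type splitting
\[
J_2(v)=\int_{{\mathbb R}^+}\Bigl(\int_X|\nabla_\gamma v(x,y)|_H^2\,d\gamma(x)\Bigr)y^{1-2s}\,dy ,
\]
valid because $|\nabla_\gamma v|_H^2$ is the squared length of the \emph{horizontal} gradient only, with no $\partial_y$ contribution. Then, for a.e.\ $y$, apply Lemma \ref{lemGN} (which is precisely the $H^1_\gamma(X)$ version of \cite[Th.~3.1]{EhrAnnENS} obtained by cylindrical approximation) to the slice $u=v(\cdot,y)$: this gives
\[
\int_X|\nabla_\gamma v^*_h(x,y)|_H^2\,d\gamma(x)\le\int_X|\nabla_\gamma v(x,y)|_H^2\,d\gamma(x)
\]
for a.e.\ $y>0$. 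Multiplying by $y^{1-2s}\ge 0$ and integrating in $y$ yields $J_2(v^*_h)\le J_2(v)$, which is the assertion.

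Two technical points need care. The first is measurability/integrability: one must know that $v(\cdot,y)\in H^1_\gamma(X)$ for a.e.\ $y$, that $y\mapsto\int_X|\nabla_\gamma v(x,y)|_H^2\,d\gamma(x)$ is measurable, and likewise for $v^*_h$; all of this follows from $v\in H^1(X\times{\mathbb R}^+,\gamma\otimes y^{1-2s}dy)$ together with Fubini's theorem applied to the product measure $\gamma\otimes y^{1-2s}dy$, plus the fact that symmetrisation does not increase the $L^2_\gamma$ norm slicewise (so $v^*_h$ is again in the weighted $L^2$ space). The second is that one must first check $v^*_h$ actually belongs to $H^1(X\times{\mathbb R}^+,\gamma\otimes y^{1-2s}dy)$: its $L^2$ part is controlled because symmetrisation preserves the distribution of $v(\cdot,y)$ on each slice, its horizontal gradient part is controlled by the inequality just derived, and its $\partial_y$ part is controlled by Lemma \ref{brock}. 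I expect the only genuine subtlety to be this slicing/measurability bookkeeping — the geometric heart of the matter is entirely contained in Lemma \ref{lemGN}, so once the parameter $y$ is correctly frozen there is no further obstacle.
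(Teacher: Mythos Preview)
Your proposal is correct and is exactly the argument the paper intends: the paper's own proof consists only of the sentence ``From Lemma~\ref{lemGN} we immediately get the following result,'' and your slicewise application of Lemma~\ref{lemGN} for fixed $y$, followed by integration against $y^{1-2s}\,dy$, is precisely what ``immediately'' means here. The measurability and membership checks you flag are the right bookkeeping and present no obstacle.
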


{}From \eqref{defHdot}, Lemma \ref{brock}
and Lemma \ref{ehrhard} we immediately get
the following result:

\begin{corollary}\label{isoperimetry}
 If $u\in H_\gamma^s(X)$ then for every $h\in H$ we have $u_h^*\in H_\gamma^s(X)$ and 
\[
[u_h^*]_{H_\gamma^s} \leq [u]_{H_\gamma^s}\,.
\] 
\end{corollary}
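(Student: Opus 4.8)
The plan is to deduce Corollary~\ref{isoperimetry} directly from the variational characterisation \eqref{defHdot} of the seminorm together with the two monotonicity results already established. First I would take $u\in H_\gamma^s(X)$ and invoke the Remark following \eqref{defHdot}: there is an extension $v_u\in H^1(X\times{\mathbb R}^+,\gamma\otimes y^{1-2s}dy)$ realising the infimum in \eqref{defHdot}, so that
\[
[u]_{H_\gamma^s}^2 = J_1(v_u) + J_2(v_u),
\]
with $J_1$, $J_2$ as in Lemmas~\ref{brock} and~\ref{ehrhard}. Fix $h\in H$ with $|h|_H=1$ (the case of general $h\in H$ reduces to this one by scaling the direction, up to relabelling, since $u_h^*$ depends only on the line $\R h$; note also the mild abuse that Lemmas~\ref{brock} and~\ref{ehrhard} are stated for $h\in X^*$, but by density of $R^*X^*$ in $\mathcal H$ one may assume $h$ of this form, or argue by approximation). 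The symmetrised extension $(v_u)_h^*$ is again in $H^1(X\times{\mathbb R}^+,\gamma\otimes y^{1-2s}dy)$ — this is exactly the qualitative content of Lemmas~\ref{brock} and~\ref{ehrhard}, whose proofs show $J_1\bigl((v_u)_h^*\bigr)\le J_1(v_u)<\infty$ and $J_2\bigl((v_u)_h^*\bigr)\le J_2(v_u)<\infty$, and the $L^2$ bound on $(v_u)_h^*$ follows since Ehrhard symmetrisation preserves the distribution in the $t$-variable and hence the measure of superlevel sets, so $\|(v_u)_h^*\|_{L^2}=\|v_u\|_{L^2}$.

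The key point is then the compatibility of the two operations of taking traces and symmetrising: the trace of $(v_u)_h^*$ on $X\times\{0\}$ is $u_h^*$. Indeed $(v_u)_h^*(x',t,0)$ is computed, for each fixed $x'$ and $y$, from the superlevel sets of $t\mapsto v_u(x',t,y)$ via the one-dimensional Ehrhard rearrangement in \eqref{defI*}–\eqref{defu*}, and since $v_u(x',\cdot,0)=u(x',\cdot)$ these superlevel sets at $y=0$ are precisely those of $u(x',\cdot)$; hence $(v_u)_h^*(\cdot,0)=u_h^*$. Consequently $(v_u)_h^*$ is an admissible competitor for the infimum \eqref{defHdot} defining $[u_h^*]_{H_\gamma^s}$, so
\[
[u_h^*]_{H_\gamma^s}^2 \le J_1\bigl((v_u)_h^*\bigr) + J_2\bigl((v_u)_h^*\bigr) \le J_1(v_u) + J_2(v_u) = [u]_{H_\gamma^s}^2,
\]
where the first inequality is the definition and the second is Lemmas~\ref{brock} and~\ref{ehrhard}. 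Taking square roots gives the claim, and $u_h^*\in H_\gamma^s(X)$ since the right-hand side is finite and $u_h^*\in L^2_\gamma(X)$.

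The main obstacle — and the only place where care is genuinely needed — is the trace identity $(v_u)_h^*(\cdot,0)=u_h^*$: one must check that the trace operator on the weighted space $H^1(X\times{\mathbb R}^+,\gamma\otimes y^{1-2s}dy)$ really does commute with the fibrewise Ehrhard rearrangement, i.e. that passing to the limit $y\to0^+$ in the rearranged function reproduces the rearrangement of the limit. Since the rearrangement \eqref{defu*} is built from the map $t\mapsto\gamma_1(\{u(x',\cdot)>c\}\cap(-\infty,t))$ composed with $\phi^{-1}$, and these are continuous in the relevant topologies, the identity holds $\gamma$-a.e.\ in $x$; in the proof of Lemma~\ref{brock} this is implicitly used already when the rearranged "nice function'' $v^*$ is formed from the inverse functions $t_k^j$. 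A clean way to record it is to note that both sides have the same superlevel sets in the $t$-variable for a.e.\ $x'$. Everything else is a one-line concatenation of the quoted lemmas.
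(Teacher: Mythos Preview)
Your proposal is correct and follows exactly the route the paper has in mind: the paper states the corollary as an immediate consequence of \eqref{defHdot}, Lemma~\ref{brock} and Lemma~\ref{ehrhard}, and what you have written is precisely the unpacking of that one-line claim (take the optimal extension $v_u$, rearrange it in the $h$-direction, observe that $(v_u)_h^*$ is admissible for $u_h^*$, and apply the two lemmas to $J_1$ and $J_2$). Your explicit discussion of the trace identity $(v_u)_h^*(\cdot,0)=u_h^*$ and of the passage from $h\in X^*$ to general $h\in H$ are details the paper leaves implicit, but they do not constitute a different approach.
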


Given $u\in L^2_\gamma(X)$, let $S_u:{\mathbb R}\to{\mathbb R}$ be the decreasing function 
defined through its inverse by the equality
\[
S_u^{-1}(t) = \phi^{-1}(\gamma(\{u>t\}), 
\]
for $\phi$ as in \eqref{defI*}, so that $\gamma(\{u>t\}) = \gamma(\{S_u>t\})$. 

\begin{theorem}\label{iteration}
Let $u\in H_\gamma^s$. Then 
\begin{equation}\label{ineq}
[S_u]_{H_{\gamma_1}^s} \leq [u]_{H_\gamma^s}\,,
\end{equation}
with equality if and only if $u$ is one-dimensional, that is,
$u(x)=S_u(\hat h(x))$ for some $h\in H$ with $|h|=1$.
\end{theorem}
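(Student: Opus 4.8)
The plan is to iterate the one-directional symmetrisation result of Corollary~\ref{isoperimetry} along an orthonormal basis $(h_j)$ of $H$ and pass to the limit, using the cylindrical approximation to reduce the equality-case analysis to a finite-dimensional statement. First I would fix an orthonormal basis $h_1, h_2, \dots$ of $H$ with $h_j = R\hat h_j$ and $\hat h_j \in X^*$ (possible since $R^*X^*$ is dense in $\mathcal H$, so each $\hat h_j$ may be taken in $R^*X^*$). For $u \in H_\gamma^s$, define $u^{(1)} = u^*_{h_1}$, then $u^{(2)} = (u^{(1)})^*_{h_2}$, and inductively $u^{(k)} = (u^{(k-1)})^*_{h_k}$. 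By Corollary~\ref{isoperimetry} each step does not increase the seminorm, so $[u^{(k)}]_{H_\gamma^s} \le [u]_{H_\gamma^s}$ for all $k$. Each symmetrisation preserves the distribution function of $u$ (it rearranges level sets within each fibre while keeping their $\gamma_1$-measure, hence keeps $\gamma(\{u > t\})$ fixed by Fubini), so $S_{u^{(k)}} = S_u$ for every $k$.

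Next I would identify the limit. After symmetrising in the directions $h_1, \dots, h_k$, the function $u^{(k)}$ is, on each fibre over $X_k^\perp$, monotone in a suitable sense; more to the point, I expect $u^{(k)}$ to converge in $L^2_\gamma(X)$ to the one-dimensional function $w(x) := S_u(\hat h_1(x))$ — or, more robustly, one shows that for any such iteration scheme $u^{(k)}$ converges (along a subsequence) to \emph{some} one-dimensional function in $L^2_\gamma$, and that the target can be taken to be $S_u(\hat h(\cdot))$ for an appropriate unit vector $h$. A clean way to package this: the sequence $u^{(k)}$ is bounded in $L^2_\gamma$ with fixed distribution function and non-increasing seminorm; by lower semicontinuity of $[\cdot]_{H_\gamma^s}$ (which follows from \eqref{defHdot} and Fatou, as used in Lemma~\ref{lemGN}) any $L^2_\gamma$-limit point $u^\infty$ satisfies $[u^\infty]_{H_\gamma^s} \le [u]_{H_\gamma^s}$ and has the same distribution as $u$, hence $S_{u^\infty} = S_u$; and by construction $u^\infty$ is invariant under symmetrisation in every basis direction, which forces it to coincide $\gamma$-a.e.\ with $S_u(\hat h_1(\cdot))$. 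Applying the one-dimensional case of Corollary~\ref{isoperimetry} (i.e.\ computing $[\cdot]_{H_\gamma^s}$ for a function depending only on $\hat h_1$, which reduces the extension problem \eqref{defHdot} to one on $\mathbb R \times \mathbb R^+$ with measure $\gamma_1 \otimes y^{1-2s}dy$) identifies $[S_u(\hat h_1(\cdot))]_{H_\gamma^s}$ with $[S_u]_{H_{\gamma_1}^s}$, giving \eqref{ineq}.

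For the equality case, suppose $[S_u]_{H_{\gamma_1}^s} = [u]_{H_\gamma^s}$. Then every inequality in the chain $[u]_{H_\gamma^s} \ge [u^{(1)}]_{H_\gamma^s} \ge \dots \ge [u^\infty]_{H_\gamma^s} = [S_u]_{H_{\gamma_1}^s}$ is an equality; in particular $[u^*_{h_j}]_{H_\gamma^s} = [u]_{H_\gamma^s}$ for the first step, and more usefully I would instead invoke the cylindrical reduction: by Proposition~\ref{prop.coarea}-type monotonicity and the density of cylindrical functions one reduces to $u = v(\Pi_m(x))$ with $v$ on $\mathbb R^m$, where the equality case of Ehrhard symmetrisation is known from \cite{EhrAnnENS, Brock} to hold only for functions that are (after an orthogonal change of coordinates) one-dimensional. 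The key technical point is that equality in the Cauchy–Schwarz step inside the proof of Lemma~\ref{brock}, combined with equality in the Pólya–Szeg\H{o} inequality \eqref{polya} of Lemma~\ref{lemGN}, forces, in each slice, the level sets of $u$ to be half-lines with a common direction, i.e.\ $u$ depends on a single variable $\hat h(x)$; then $u = S_u(\hat h(\cdot))$ since the distributions agree. The main obstacle I anticipate is precisely this rigidity statement: tracking the equality cases through both the Caffarelli–Silvestre extension (the minimiser $v_u$ of \eqref{defHdot} must also be symmetric, which needs uniqueness/regularity for the degenerate-weight problem \eqref{pbextension}) and the two separate symmetrisation lemmas, and then transferring the finite-dimensional rigidity to $X$ via the cylindrical approximation without losing the equality. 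Handling the direction $h$ carefully — it is determined by $u$, not fixed in advance, so the iteration must be shown to ``stabilise'' towards the correct axis — is the subtle part; a convexity/strict-convexity argument for $V_s$ in Proposition~\ref{prop.coarea} restricted to the set of functions with a given distribution may be the cleanest route to pin it down.
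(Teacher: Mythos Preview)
Your approach to the inequality \eqref{ineq} is in the right spirit but contains a genuine gap. Symmetrising successively along an orthonormal basis $h_1, h_2, \dots$, each direction exactly once, does \emph{not} guarantee that a limit point is invariant under symmetrisation in each $h_j$: symmetrising in $h_2$ can, and typically does, destroy the monotone structure in the $h_1$-direction produced by the previous step, so your claim that ``by construction $u^\infty$ is invariant under symmetrisation in every basis direction'' is unjustified. The paper avoids this by first passing to the canonical cylindrical approximations $u_n = \mathbb{E}_n u$ (for which $[u_n]_{H_\gamma^s} \le [u]_{H_\gamma^s}$ by Jensen applied to the extension), and then, for each fixed $n$, iterating the symmetrisation along a sequence $(h_k)$ \emph{dense in the unit sphere of the finite-dimensional space $H_n$}. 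Compactness in $L^2$ yields a subsequential limit $\tilde u_n$, and density of the directions forces $\tilde u_n$ to be Ehrhard-symmetric with respect to every direction in $H_n$, hence $\tilde u_n = S_{u_n}(\hat h(\cdot))$ for some unit $h \in H_n$. One then lets $n \to \infty$ using $S_{u_n} \to S_u$ in $L^2_{\gamma_1}$ and lower semicontinuity of the seminorm.

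For the equality case your plan is substantially more involved than what is actually needed, and the obstacles you anticipate (tracking equality through the iterated symmetrisation and through cylindrical approximation) are real and hard to overcome directly. The paper's argument bypasses all of this by working with the extension itself: equality in \eqref{ineq} forces equality simultaneously in the two pieces $J_1$ and $J_2$ of the extension energy (Lemmas~\ref{brock} and~\ref{ehrhard}) for the minimisers $v_u$ and $v_{S_u}$ of \eqref{defHdot}. Equality in $J_2$ says that for a.e.\ $y>0$ the slice $v_u(\cdot,y)\in H^1_\gamma(X)$ realises equality in the Gaussian P\'olya--Szeg\H{o} inequality \eqref{polya}. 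The rigidity for \emph{that} local inequality is already available in the Wiener-space setting \cite[Prop.~3.12]{GoldNov}: it forces $v_u(\cdot,y)$ to be one-dimensional for a.e.\ $y$, and hence $u = v_u(\cdot,0)$ is one-dimensional. The idea you are missing is therefore to use the extension not only to prove the inequality but also to import the known $H^1$-level rigidity slice by slice, rather than attempting an $H^s$-level rigidity from scratch.
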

\begin{proof}
We first show the inequality \eqref{ineq}.
Let $(u_n)$ be the canonical cylindrical approximation of $u$ defined in \eqref{cancylapprox}, let $(h_k)$ be a sequence 
dense in $\{h\in H_n: |h|_H=1\}$ and let $u_{n,k}$ be iteratively defined by 
$u_{n,0}=u_n$ and $u_{n,k}=(u_{n,k-1})^*_{h_k}$ as in \eqref{defu*}. Then, 
$\|u_{n,k}\|_{L^2_\gamma(X)} = \|u_{n}\|_{L^2_\gamma(X)}$ for every $k$ and by the preceding lemmas, 
we have that $[u_{n,k}]_{H_\gamma^s}\leq [u_{n}]_{H_\gamma^s}$, hence (up to a subsequence 
that we don't relabel) the sequence $(u_{n,k})$ converges to a function $\tilde{u}_n$ in $L^2_\gamma(X)$ 
with $[\tilde{u}_n]_{H_\gamma^s} \leq [u_n]_{H_\gamma^s}$. 
Since $\tilde{u}_n$ is symmetric 
with respect to all the directions in $H_n$, it can be written as $\tilde{u}_n(x)=S_{u_n}(\hat{h}(x))$
for some $h\in H_n$. From Lemma \ref{brock}
and Lemma \ref{ehrhard} it follows that 
\[
[S_{u_n}]_{H_{\gamma_1}^s} = [S_{u_n}\circ \hat h]_{H_\gamma^s}
\leq [u_n]_{H_\gamma^s} \leq [u]_{H_\gamma^s} .
\]
Passing to the limit as $n\to\infty$ and noting that $S_{u_n}\to S_u$ in $L^2_{\gamma_1}(\R)$,
we get the inequality \eqref{ineq}.  

\smallskip

Assume now that the equality holds in \eqref{ineq}.
Again by Lemma \ref{brock}
and Lemma \ref{ehrhard}, this implies that 
\[
\int_{X\times \R^+}|\partial_y v_{S_u}|^2y^{1-2s}\,\gamma(x)\,dy\, =\,
\int_{X\times \R^+}|\partial_y v_{u}|^2y^{1-2s}\,\gamma(x)\,dy
\]
and
\[
\int_{\R^+}\|\nabla_{\gamma} v_{S_u}(\cdot,t)\|^2_{L^2_\gamma(X)}y^{1-2s}\,dy\, =\,
\int_{\R^+}\|\nabla_{\gamma} v_{u}(\cdot,t)\|^2_{L^2_\gamma(X)}y^{1-2s}\,dy\,,
\]
where $v_{S_u}, v_u$ are the corresponding minimisers 
of the right-hand side of \eqref{defHdot}. Hence, 
for a.e. $t>0$ we have
\[
\|\nabla_{\gamma} v_{S_u}(\cdot,t)\|_{L^2_\gamma(X)} 
= 
\|\nabla_{\gamma} v_u(\cdot,t)\|_{L^2_\gamma(X)}\,.
\]
Thanks to \cite[Prop. 3.12]{GoldNov},
it follows that $v_u$ is one-dimensional for a.e. $t>0$,
which implies that $u$ is also one-dimensional,
and concludes the proof.
\end{proof}

A direct consequence of Theorem \ref{iteration} is the following 
symmetry result:

\begin{theorem}\label{th.Doublewell}
Let $m>0$ and $F:{\mathbb R}\to{\mathbb R}$ be lower semicontinuous, and assume that the problem 
\begin{equation}\label{1Dproblem}
\min\Bigl\{ [w]_{H_{\gamma_1}^s} + \int_{\mathbb R} F(w)\, d\gamma_1 :\ 
\int_{\mathbb R}w\, d\gamma_1=m\Bigr\}
\end{equation}
admits a minimiser. Then the unique minimisers of the problem 
\begin{equation}\label{Xproblem}
\min\Bigl\{ [u]_{H_\gamma^s} + \int_{X} F(u)\, d\gamma :\ 
\int_{X}u\, d\gamma=m\Bigr\}
\end{equation}
are given by $u(x)=\varphi(\hat{h}(x))$ for some minimiser $\varphi$ of problem \eqref{1Dproblem} 
and for some $h\in H$. 
\end{theorem}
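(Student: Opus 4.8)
The plan is to deduce Theorem \ref{th.Doublewell} from Theorem \ref{iteration} by a rearrangement argument. First I would take a minimiser $u$ of problem \eqref{Xproblem}; such a minimiser exists because we are comparing against the candidate $\varphi(\hat h(\cdot))$ built from a minimiser $\varphi$ of \eqref{1Dproblem}, whose existence is assumed. The key observation is that the nonlocal term $[u]_{H^s_\gamma}$ and the potential term $\int_X F(u)\,d\gamma$ behave well under the symmetrisation $S_u\circ\hat h$: by construction $\gamma(\{u>t\})=\gamma(\{S_u>t\})$ for all $t$, so the distribution of $u$ under $\gamma$ coincides with that of $S_u$ under $\gamma_1$, hence $\int_X F(u)\,d\gamma=\int_{\mathbb R} F(S_u)\,d\gamma_1$ and likewise $\int_X u\,d\gamma=\int_{\mathbb R}S_u\,d\gamma_1=m$; in particular $S_u$ is admissible for \eqref{1Dproblem}. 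By Theorem \ref{iteration} we have $[S_u]_{H^s_{\gamma_1}}\le[u]_{H^s_\gamma}$, so
\[
[S_u]_{H^s_{\gamma_1}}+\int_{\mathbb R}F(S_u)\,d\gamma_1\ \le\ [u]_{H^s_\gamma}+\int_X F(u)\,d\gamma,
\]
which shows the minimum in \eqref{1Dproblem} is $\le$ the minimum in \eqref{Xproblem}. Conversely, any minimiser $\varphi$ of \eqref{1Dproblem} produces the competitor $u(x)=\varphi(\hat h(x))$ for \eqref{Xproblem}: it has the correct mass, $\int_X F(u)\,d\gamma=\int_{\mathbb R}F(\varphi)\,d\gamma_1$, and $[u]_{H^s_\gamma}=[\varphi\circ\hat h]_{H^s_\gamma}=[\varphi]_{H^s_{\gamma_1}}$ by the identity used in the proof of Theorem \ref{iteration}. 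Hence the two minimum values coincide, and $u(x)=\varphi(\hat h(x))$ is a minimiser of \eqref{Xproblem} for every minimiser $\varphi$ of \eqref{1Dproblem}.

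It remains to prove that these are the \emph{only} minimisers. Let $u$ be any minimiser of \eqref{Xproblem}. Since $S_u$ is admissible for \eqref{1Dproblem} and, as just noted, $[S_u\circ\hat h]_{H^s_\gamma}=[S_u]_{H^s_{\gamma_1}}$ while the potential and mass are unchanged, the chain of inequalities
\[
[S_u]_{H^s_{\gamma_1}}+\int_{\mathbb R}F(S_u)\,d\gamma_1\ \le\ [u]_{H^s_\gamma}+\int_X F(u)\,d\gamma\ =\ \min\eqref{Xproblem}\ =\ \min\eqref{1Dproblem}
\]
forces $[S_u]_{H^s_{\gamma_1}}=[u]_{H^s_\gamma}$ (the potential and mass terms being literally equal on the two sides). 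By the equality case of Theorem \ref{iteration}, $u$ is one-dimensional, i.e.\ $u(x)=S_u(\hat h(x))$ for some $h\in H$ with $|h|_H=1$. Writing $\varphi:=S_u$, the function $\varphi$ then realises the minimum in \eqref{1Dproblem}, so it is a minimiser of the one-dimensional problem, and $u$ has the claimed form.

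The only point requiring a little care is the invariance of the functional under the rearrangement: that $\int_X F(u)\,d\gamma$ depends on $u$ only through its distribution function, which is immediate from the layer-cake formula once one knows $F$ is (say) bounded below or that both integrals make sense, and that the nonlocal seminorm is unchanged when one passes from a one-dimensional profile on $\mathbb R$ to its pullback on $X$, which is exactly the identity $[\psi\circ\hat h]_{H^s_\gamma}=[\psi]_{H^s_{\gamma_1}}$ established within the proof of Theorem \ref{iteration} via the extension \eqref{defHdot}. I expect the main (minor) obstacle to be a clean treatment of the lower semicontinuity of $F$ in the potential term so that $\int_{\mathbb R}F(S_u)\,d\gamma_1$ is well defined and the equality $\int_X F(u)\,d\gamma=\int_{\mathbb R}F(S_u)\,d\gamma_1$ genuinely holds; everything else is a direct application of Theorem \ref{iteration} and its equality case.
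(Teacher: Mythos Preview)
Your argument is correct and is precisely the approach intended in the paper, which simply records Theorem \ref{th.Doublewell} as ``a direct consequence of Theorem \ref{iteration}'' without spelling out the details. The only quibble is your opening sentence: you cannot \emph{start} by taking a minimiser of \eqref{Xproblem}, since its existence is part of what you are proving; but your argument in fact establishes existence a few lines later, when you show that $\varphi(\hat h(\cdot))$ achieves the common minimum value, so the logic is sound once reordered.
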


We can conclude with the proof of Theorem \ref{thisoperimetric}. 

\begin{proof}[Proof of Theorem \ref{thisoperimetric}]
Theorem \ref{thisoperimetric} follows from Theorem \ref{iteration}, by taking $u=\chi_E$ to be the 
characteristic function of $E$. 
\end{proof}

\medskip

\subsection*{Acknowledgements} D.P. has been partially supported by the PRIN 2010 MIUR project ``Problemi differenziali di evoluzione: 
approcci deterministici e stocastici e loro interazioni".  Y.S. has been partially supported by 
the ERC grant $\epsilon$ ``Elliptic Pde's and Symmetry of Interfaces and Layers
for Odd Nonlinearities", and the ANR project "HAB".
M.N. and D.P. are members of the italian CNR-GNAMPA. 


\end{document}